\documentclass{article}
\pdfoutput=1
\usepackage{amsmath}
\usepackage{amscd}
\usepackage{amssymb}
\usepackage{amsfonts}
\usepackage{amsthm}
\usepackage{graphicx}
\usepackage{cite}
\usepackage{hyperref}
\newcommand{\sphere}{\mathbb{S}}
\newcommand{\torus}{\mathbb{T}}
\newcommand{\real}{\mathbb{R}}
\newcommand{\integer}{\mathbb{Z}}

\newcommand{\set}[1]{\{#1\}}

\newtheorem{thm}{Theorem}

\newtheorem{prop}{Proposition}[section]
\newtheorem{lem}{Lemma}[section]

\theoremstyle{remark}
\newtheorem*{rem}{Remark}

\theoremstyle{definition}

\date{December 21, 2012}
\begin{document}
\author{ Haomin Wen\thanks{Department of Mathematics, University of Pennsylvania, Philadelphia, PA 19104, USA e-mail: weh@math.upenn.edu}}
\title{Lipschitz minimality of the multiplication maps of unit complex, quaternion and octonion numbers}
\maketitle

\begin{abstract}
We prove that the multiplication maps
$\sphere^n \times \sphere^n \rightarrow \sphere^n$ ($n = 1, 3, 7$)
for unit complex, quaternion and octonion numbers are,
up to isometries of domain and range,
the unique Lipschitz constant minimizers in their homotopy classes.
Other geometrically natural maps, such as projections of Hopf fibrations,
have already been shown to be, up to isometries,
the unique Lipschitz constant minimizers in their homotopy classes,
and it is suspected that this may hold true for all Riemannian submersions of compact homogeneous spaces.
\end{abstract}

\section[Introduction]{Introduction}
\subsection{Background}
A map $f : M \rightarrow N$ between two metric spaces $(M, d_M)$ and $(N, d_N)$
is a \emph{Lipschitz map}
if there is $C > 0$ such that $d_N(f(x_1), f(x_2)) \le C \  d_M(x_1, x_2)$ for any $x_1, x_2 \in M$.  
The smallest such constant $C$ is called the \emph{Lipschitz constant} of $f$
and denoted by $L_f(M)$.
If a Lipschitz map has the smallest Lipschitz constant in its homotopy class,
then it is called a \emph{Lipschitz constant minimizer}.
Note that
there always exists a Lipschitz constant minimizer (by Arzel\`a-Ascoli)
in the homotopy class of any Lipschitz map from $M$ to $N$
when $M$ and $N$ are compact.

Sometimes it is possible to recognize certain special maps in terms of
Lipschitz constant and homotopy class.
Previously, there have been results using other invariants
like volume or energy,
but even some of the simplest maps can not be characterized by just using these two invariants.
For example,
the inclusion map $\Delta \sphere^3 \rightarrow \sphere^3 \times \sphere^3$
is neither volume minimizing (since $\sphere^3 \vee \sphere^3$ has smaller volume) nor energy minimizing \cite{white-1986} in its homotopy class.
However, it is shown \cite{deturk-gluck-storm-2010} that
this map is the Lipschitz constant minimizer in its homotopy class,
unique up to isometries on the domain and range.
See \cite{deturk-gluck-storm-2010} for more examples of this type including Hopf fibrations.


\subsection{Main result}
The authors of \cite{deturk-gluck-storm-2010} suspected that many more maps,
such as Riemannian submersions of compact homogeneous spaces,
are Lipschitz constant minimizers in their homotopy classes,
unique up to isometries on the domain and range.
(It is necessary to assume certain homogeneity,
otherwise there will be counterexamples as shown in Section \ref{counter_example}.)
Then it is natural to consider group multiplication maps on compact groups which provide an easy class of Riemannian submersions of compact homogeneous spaces.
The simplest case is $\sphere^1 \times \sphere^1 \rightarrow \sphere^1$,
which is more or less trivial.
The first interesting compact group to look at is $\sphere^3$ and we have the following theorem.

\begin{thm}
    The Lipschitz constant of any map $f : \sphere^n \times \sphere^n \rightarrow \sphere^n$ ($n = 1, 3, 7$) homotopic to the multiplication map $m$ of unit complex, quaternion or octonion numbers is $ \ge \sqrt{2}$,
    with equality if and only if $f$ is isometric to $m$.
    \label{dim3}
\end{thm}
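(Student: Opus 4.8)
The plan is to argue in three stages: first pin down that $L_m=\sqrt{2}$ and record the relevant geometry; then prove the lower bound $L_f\ge\sqrt 2$ for every $f\simeq m$ by a degree/linking argument on a family of ``extremal secant pairs''; and finally upgrade the lower bound to a rigidity statement. For the value $L_m=\sqrt2$: one computes the differential of $m$ (for quaternions, $dm_{(x,y)}(u,v)=uy+xv$, and analogously for octonions, using that left and right multiplication by unit elements are isometries of $\sphere^n$ since $\mathbb{H},\mathbb{O}$ are composition algebras), and notes $|uy+xv|^2\le 2(|u|^2+|v|^2)$ with equality exactly when $|u|=|v|$ and $uy=xv$. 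Thus at $(x,y)$ the maximal-stretch directions form an $n$-plane $H_{(x,y)}=\{(u,x^{-1}uy)\}$ on which $dm$ is a homothety of ratio $\sqrt 2$, and $H^{\perp}$ is tangent to the fibre $m^{-1}(xy)$, which is a round $\sphere^n$ of radius $\sqrt 2$. I would also record the distinguished closed geodesics $\gamma_{p,\xi}(t)=(xe^{t\xi},e^{t\xi}y)$ with $\xi$ a unit imaginary element: these have length $2\pi\sqrt 2$, are everywhere tangent to the maximal-stretch distribution, and are mapped by $m$ onto a great circle of $\sphere^n$ traversed twice, their quarter-points $p,\,(x\xi,\xi y),\,(-x,-y),\,(-x\xi,-\xi y)$ going to $xy,-xy,xy,-xy$. (For quaternions this uses associativity and $\xi^2=-1$; for $n=7$ the products must be manipulated with the Moufang/alternative identities and associator corrections tracked, which is where the octonion case becomes genuinely more delicate.)

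For the lower bound, the key observation is that $d\bigl(p,(x\xi,\xi y)\bigr)=\pi/\sqrt2$ (each coordinate moves by $d(1,\xi)=\pi/2$) while $m(p)=xy$ and $m(x\xi,\xi y)=-xy$ are antipodal, so $m$ realizes the stretch factor $\sqrt2$ along a whole $\sphere^{n-1}$-family of honest secant pairs, not just infinitesimally. The strategy is to show that any $f\simeq m$ must still send some such pair (or a pair at most infinitesimally longer) to an antipodal pair, which forces $L_f\ge \pi/(\pi/\sqrt2)=\sqrt2$. Concretely, let $R_\xi\colon(x,y)\mapsto(x\xi,\xi y)$, which is an isometry of $\sphere^n\times\sphere^n$ for every $\xi\in\sphere^n$, and consider
\[
\Psi_f\colon(\sphere^n\times\sphere^n)\times\sphere^n\longrightarrow(\sphere^n\times\sphere^n)\times(\sphere^n\times\sphere^n),\qquad(p,\xi)\mapsto\bigl(f(p),\,f(R_\xi p)\bigr),
\]
together with the antipodal diagonal $\bar\Delta=\{(z,-z)\}$. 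For $f=m$ one checks $\Psi_m^{-1}(\bar\Delta)=(\sphere^n\times\sphere^n)\times\sphere^{n-1}$ (the imaginary units), and that along this locus the derivative of $\Psi_m$ transverse to it covers the unit normal sphere of $\bar\Delta$ with degree one. Since $\Psi_f\simeq\Psi_m$, this essential linking is preserved, so $\Psi_f$ must meet $\bar\Delta$: there is $(p,\xi)$ with $f(R_\xi p)=-f(p)$.

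The \textbf{main obstacle} is exactly the quantitative part of this step: the topological argument only produces \emph{some} $(p,\xi)$, and a priori $\xi$ need not lie near the imaginary equator, so $d(p,R_\xi p)=\sqrt2\,d(1,\xi)$ could be as large as $\sqrt2\,\pi$ and the resulting bound on $L_f$ too weak. One has to confine the intersection: I expect this requires replacing $f$ by a smooth competitor with the same Lipschitz constant, using the bound $L_f\le C$ to control $f$ near $m$ at the scale of the relevant neighbourhood of the degenerate locus where $m$ itself sits, and analyzing the linking locally so that the intersection point is forced to have $\xi$ within $o(1)$ of $\sphere^{n-1}$; a compactness or self-improvement argument over the family then gives $L_f\ge\sqrt 2$. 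Making this robust (and carrying it out for $n=7$, where $(x\xi)(\xi y)$ is only $\pm xy$ up to an associator) is the technical heart of the proof.

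For rigidity, suppose $L_f=\sqrt2$. Then equality propagates through the above: each extremal pair goes to an antipodal pair, and in fact $f$ must stretch each closed geodesic $\gamma_{p,\xi}$ maximally. On such a $\gamma_{p,\xi}$ the four quarter-arcs have length $\pi/\sqrt2$ and map under $f$ to curves joining consecutive antipodes of $\sphere^n$; a curve of length $\le\sqrt2\cdot\pi/\sqrt2=\pi$ between antipodal points of $\sphere^n$ is a minimizing half great circle, so $f\circ\gamma_{p,\xi}$ is a great circle traversed at constant double speed. Hence $f$ is differentiable along the horizontal foliation with $df$ a ratio-$\sqrt2$ homothety there; running the pair argument with the vertical geodesics $(xe^{t\xi},e^{-t\xi}y)$, which $m$ collapses, shows $df$ annihilates vertical directions, so on a full-measure set $f$ is a $\sqrt2$-homothetic submersion with exactly the fibres $m^{-1}(c)$ of $m$. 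Therefore $f$ descends to $\phi\colon\sphere^n\to\sphere^n$ with $f=\phi\circ m$, and since $L_f=L_m=\sqrt2$ this $\phi$ has Lipschitz constant $1$ and degree $\pm1$, hence is an isometry. The remaining technical points are the usual regularity bootstrap (from a.e./dense statements to genuine smoothness and a clean factorization) and, once more, redoing the computations for $n=7$ with the Moufang laws in place of associativity.
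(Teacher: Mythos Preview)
Your proposal has genuine gaps in both halves.

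\textbf{Lower bound.} You yourself flag the main obstacle and do not resolve it: the linking argument with $\Psi_f$ only produces \emph{some} $(p,\xi)$ with $f(R_\xi p)=-f(p)$, and without confining $\xi$ to (or near) the imaginary equator you get no useful bound on $L_f$. The ``compactness or self-improvement'' fix you gesture at is not an argument. The paper bypasses this entirely with a one-line trick you missed: restrict $f$ to the diagonal $\Delta\sphere^n=\{(x,x)\}$, which is isometric to $\sqrt{2}\,\sphere^n$. Since $m(x,x)=x^2$ has degree $2$, so does $f|_{\Delta\sphere^n}$, and the Olivier--Gromov theorem (any self-map of $\sphere^n$ of degree $\ge 2$ has Lipschitz constant $\ge 2$) gives $L_f(\Delta\sphere^n)\ge 2/\sqrt{2}=\sqrt{2}$ immediately. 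No linking, no secant pairs, and the argument is identical for $n=1,3,7$.

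\textbf{Rigidity.} Your equality case does not follow from your lower-bound mechanism. Even granting $L_f=\sqrt2$, a topological intersection argument gives you \emph{one} extremal pair mapped to antipodes, not that ``each extremal pair goes to an antipodal pair'' or that every $\gamma_{p,\xi}$ is stretched maximally. The subsequent claim that $df$ annihilates the vertical directions (the fibres of $m$) is likewise unsupported: nothing you have written forces $f$ to have the \emph{same} fibres as $m$, and in general it will not --- the correct conclusion is only that $f$ is isometric to $m$, not that $f=\phi\circ m$. The paper's route is quite different: it first proves two metric inequalities (if $(x_1,y_1),(x_2,y_2)$ lie in the same fibre of $f$ then $(2\pi-d(x_1,x_2))^2+d(y_1,y_2)^2\ge 2\pi^2$ and symmetrically), obtained by a homological construction of an $n$-cycle through the two points that would otherwise lie in the $\pi/\sqrt2$-neighbourhood of the fibre yet be forced to meet the antipodal fibre. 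From these inequalities one reads off that each fibre of $f$ is the graph of an isometry of $\sphere^n$, then that the fibres are mutually parallel great $n$-spheres in $\sqrt2\,\sphere^{2n+1}$, and finally one invokes the Wolf--Wong classification of maximal isoclinic families to extend $f$ to a fibration isometric to the Hopf fibration and conclude. None of this apparatus appears in your outline, and without something playing its role your rigidity argument does not close.
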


$f_1 : M \rightarrow N$ and $f_2 : M \rightarrow N$ are said to be \emph{isometric},
if there are isometries $g_M : M \rightarrow M$ and $g_N : N \rightarrow N$
such that $g_N \circ f_1 = f_2 \circ g_M$.

\begin{rem}
    The multiplication map of $\sphere^1$ is an energy minimizer in its homotopy class,
    but the multiplication map of $\sphere^3$ is not.
    In fact,
    by a result in \cite{white-1986},
    the energy of maps homotopic to the identity map on $\sphere^3$ can be arbitrarily small.
    If we construct a map $f : \sphere^3 \rightarrow \sphere^3$ as in \cite{white-1986}
    which is homotopic to the identity map
    and which is of very small energy,
    then a direct computation will show that
    the composition of $f$ and the multiplication map
    is also of very small energy.
\end{rem}

\subsection{Acknowledgements}
The author is especially grateful to Herman Gluck,
for his guidance and encouragement during the preparation of the paper.
Many ideas in this paper are from discussions with him and Figure \ref{fig_f3d}
was drawn by him.
Also,
the idea of using Proposition \ref{prop_ext} is due to Herman.

Many thanks to Christopher Croke
for carefully reading the draft of this paper,
to Clayton Shonkwiler for
an earlier proof of Proposition \ref{prop_at_least},
and to Haggai Nuchi for
explaining Proposition \ref{prop_ext} to me in detail.

Special thanks to
Dennis DeTurck,
Paul Melvin,
Shea Vela-Vick,
Kerstin Baer,
Li-Ping Mo
and
Eric Korman,
for their help during the preparation of the paper.

%

%
%
%


\section{Proof of Theorem \ref{dim3} }
In this section,
$m : \sphere^n \times \sphere^n \rightarrow \sphere^n$ ($n = 1, 3, 7$)
will denote the multiplication map of unit complex, quaternion or octonion numbers and
$f$ will be a map homotopic to $m$.
On any Riemannian manifold, $d$ will denote the distance function generated by the underlying Riemannian metric.
\subsection{Lipschitz minimality}

The Lipschitz minimality follows from the following theorem from \cite{gromov-1978},
which was first proved in \cite{olivier-1966} when $d$ is even.
There is also a proof of this theorem in \cite{gromov-2007}.
\begin{prop}[\cite{olivier-1966, gromov-1978}]
    Suppose $g : \sphere^n \rightarrow \sphere^n$ is of degree $d$.
    When $|d| \ge 2$,
    $L_g(\sphere^n) \ge 2$.
    \label{prop_o}
\end{prop}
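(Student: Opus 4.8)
The plan is to prove the contrapositive: assuming $L := L_g(\sphere^n) < 2$, I will show $|d| \le 1$. If $d = 0$ there is nothing to prove, so assume $d \neq 0$; then $g$ is surjective. Two cases are elementary and I would dispose of them first. When $n = 1$, a lift $\tilde g : \real \to \real$ of $g$ is $L$-Lipschitz (the round metric agrees locally with the Euclidean one) and satisfies $\tilde g(\theta + 2\pi) = \tilde g(\theta) + 2\pi d$, so $2\pi|d| = |\tilde g(2\pi) - \tilde g(0)| \le 2\pi L < 4\pi$, whence $|d| \le 1$. And for any $n$, if $L \le 1$ then $|d|\,\mathrm{vol}(\sphere^n) = \bigl|\int_{\sphere^n} g^*\,\mathrm{vol}_{\sphere^n}\bigr| \le \int_{\sphere^n} |\mathrm{Jac}\,g| \le L^n\,\mathrm{vol}(\sphere^n)$ (as $|\mathrm{Jac}\,g| \le L^n$ pointwise), so $|d| \le 1$. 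Hence I may assume $n \ge 2$ and $1 < L < 2$.

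The single geometric input I would isolate is a \emph{localization lemma}: for every $q \in \sphere^n$ the fibre $g^{-1}(q)$ is contained in a geodesic ball of radius $\rho := \pi - \pi/L < \pi/2$. To see this, pick $y_0$ with $g(y_0) = -q$ (surjectivity). For $x \in g^{-1}(q)$ we have $\pi = d(q, -q) = d(g(x), g(y_0)) \le L\,d(x, y_0)$, so $d(x, y_0) \ge \pi/L$ and therefore $d(x, -y_0) \le \pi - \pi/L = \rho$; thus $g^{-1}(q) \subseteq \bar B(-y_0, \rho)$. Equivalently: since $L < 2$, the image of any closed hemisphere is a geodesic ball of radius $L\pi/2 < \pi$, hence a topological disk and a proper subset of $\sphere^n$ (it misses a neighbourhood of a point).

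For \emph{even} $d$ — Olivier's case — this already finishes the proof. First, there must be a point $x_0$ with $g(x_0) = g(-x_0)$: otherwise $\hat h(x) := \bigl(g(x) - g(-x)\bigr)/\|g(x) - g(-x)\|$ is a well-defined continuous map $\sphere^n \to \sphere^n$ satisfying $\hat h(-x) = -\hat h(x)$, and since $\langle g(x), \hat h(x)\rangle = \bigl(1 - \langle g(x), g(-x)\rangle\bigr)/\|g(x) - g(-x)\| > 0$ the straight-line homotopy $g_t(x) = \bigl((1-t)g(x) + t\hat h(x)\bigr)/\|(1-t)g(x) + t\hat h(x)\|$ has nowhere-vanishing denominator, so $g$ is homotopic to the antipode-preserving map $\hat h$, forcing $d = \deg \hat h$ to be odd by the Borsuk--Ulam theorem — a contradiction. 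So $x_0$ exists; then $x_0$ and $-x_0$ both lie in $g^{-1}(g(x_0))$, so $\mathrm{diam}\, g^{-1}(g(x_0)) = \pi$, contradicting the localization lemma (which bounds it by $2\rho < \pi$). Hence $L_g(\sphere^n) \ge 2$ whenever $d$ is even and nonzero.

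It remains to rule out \emph{odd} $|d| \ge 3$ — Gromov's refinement, and the part where I expect the genuine difficulty. Here the Borsuk--Ulam shortcut is unavailable (the identity has no $x_0$ with $g(x_0) = g(-x_0)$), so I would localize the degree itself. Fixing $q$, a point $y_0 \in g^{-1}(-q)$, and a radius $r$ with $\rho < r < \pi/2$, the fibre $g^{-1}(q)$ lies in the open ball $B := B(-y_0, r)$, with $q \notin g(\partial B)$; thus $\deg g$ equals the local degree of $g$ at $q$, computed from $g|_{\overline{B}}$ relative to $\partial B$. Since $g(\overline{B})$ lies in a geodesic ball $D$ of radius $Lr < \pi$ — a disk avoiding a neighbourhood of $-g(-y_0)$ — this local degree is exactly the winding number of $g|_{\partial B} : \partial B \to D \setminus \{q\} \simeq \sphere^{n-1}$ about $q$. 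The main obstacle is then to show that this winding number is at most $1$ in absolute value. The soft metric estimates above do not on their own reach the optimal constant $2$ (they stall near $L \sim 3/2$); closing the gap — by ruling out, using $L < 2$, the high-winding configurations of $g|_{\partial B}$ through a delicate analysis on the small sphere $\partial B$ — is precisely the work carried out in \cite{olivier-1966, gromov-1978}, with another account in \cite{gromov-2007}, and is the step I expect to occupy the bulk of the argument.
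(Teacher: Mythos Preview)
The paper does not prove this proposition at all; it is quoted as a result of Olivier (for even $d$) and Gromov (general $d$), with a pointer to \cite{gromov-2007} for an exposition. So there is no in-paper argument to compare against. Note also that the paper only ever invokes the result with $d=2$.

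Your treatment of the \emph{even} case is correct and self-contained, and is essentially Olivier's argument: the localization lemma (each fibre lies in a ball of radius $\rho=\pi-\pi/L<\pi/2$ once $L<2$) together with the Borsuk--Ulam step (a map of even degree must identify some antipodal pair, since otherwise it is homotopic to an odd map and hence of odd degree) yields the contradiction $\pi=d(x_0,-x_0)\le 2\rho<\pi$. The $n=1$ and $L\le 1$ reductions are also fine. Since the paper applies the proposition only with $d=2$, this already supplies everything the paper actually needs.

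For \emph{odd} $|d|\ge 3$ you do not give a proof, and you say so: after reducing $\deg g$ to the winding number of $g|_{\partial B}$ about $q$, you defer the decisive bound on that winding number to the cited references. That is an honest account of where the difficulty lies, but it means your proposal establishes only the Olivier half of the statement. If the aim is a complete proof of Proposition~\ref{prop_o} as written, this is a genuine gap; if the aim is to justify what the paper uses, you have done so.
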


We shall use this result with $n = 1, 3, 7$ and $d = 2$ to prove the following proposition.

\begin{prop}
    The Lipschitz constant of $f$ is at least $\sqrt{2}$,
    that is to say,
    $L_f(\sphere^n \times \sphere^n) \ge \sqrt{2}$.
    \label{prop_at_least}
\end{prop}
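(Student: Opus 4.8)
The plan is to reduce the lower bound on $L_f$ to the degree-$2$ case of Proposition \ref{prop_o} by restricting $f$ to a suitable circle-worth (or $\sphere^n$-worth) of the domain and postcomposing with nothing, just tracking how the Lipschitz constant of the domain restriction relates to that of $f$. The key observation is that the map $m$, restricted to the diagonal $\Delta\sphere^n = \{(x,x) : x \in \sphere^n\} \subset \sphere^n \times \sphere^n$, is (up to the obvious identification $\Delta\sphere^n \cong \sphere^n$) the squaring map $x \mapsto x^2$, which has degree $2$ for $n = 1, 3, 7$ (for $n=3,7$ one checks the degree is $2$ by a homotopy/Hopf-invariant or direct argument, or simply because the squaring map on the $3$-sphere $\mathrm{Sp}(1)$ has degree $2$ and the octonion case is handled identically). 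Since $f$ is homotopic to $m$, the restriction $f|_{\Delta\sphere^n}$ is homotopic to $m|_{\Delta\sphere^n}$, hence to a degree-$2$ self-map of $\sphere^n$.

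First I would fix the isometric embedding $\iota : \sphere^n \to \sphere^n \times \sphere^n$, $\iota(x) = (x,x)$, and note that it scales distances by exactly $\sqrt{2}$: $d_{\sphere^n\times\sphere^n}(\iota(x),\iota(y)) = \sqrt{2}\, d_{\sphere^n}(x,y)$, using the product Riemannian metric. Then $g := f \circ \iota : \sphere^n \to \sphere^n$ is a map of degree $2$ (by the homotopy argument above), so Proposition \ref{prop_o} gives $L_g(\sphere^n) \ge 2$. On the other hand, for any $x, y \in \sphere^n$,
\[
d_{\sphere^n}(g(x), g(y)) = d_{\sphere^n}\big(f(\iota(x)), f(\iota(y))\big) \le L_f \cdot d_{\sphere^n\times\sphere^n}(\iota(x),\iota(y)) = \sqrt{2}\, L_f \cdot d_{\sphere^n}(x,y),
\]
so $L_g(\sphere^n) \le \sqrt{2}\, L_f(\sphere^n\times\sphere^n)$. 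Combining the two inequalities yields $\sqrt{2}\, L_f \ge 2$, i.e. $L_f(\sphere^n \times \sphere^n) \ge \sqrt{2}$, as desired.

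The main obstacle is the degree computation: one must verify that $m|_{\Delta\sphere^n}$ really does have degree $2$ for each $n \in \{1,3,7\}$, and more precisely that homotoping $f$ to $m$ and then restricting preserves the homotopy class of the diagonal restriction (this last point is immediate, since restricting a homotopy $H : (\sphere^n\times\sphere^n)\times[0,1] \to \sphere^n$ to $\Delta\sphere^n \times [0,1]$ gives a homotopy between $f|_{\Delta\sphere^n}$ and $m|_{\Delta\sphere^n}$). For $n=1$ the squaring map $z \mapsto z^2$ on unit complex numbers is visibly degree $2$. For $n = 3$, the squaring map on unit quaternions is degree $2$: one way to see it is that $\mathrm{Sp}(1) \to \mathrm{Sp}(1)$, $q \mapsto q^2$, pulls back the generator of $H^3$ to twice the generator, which can be checked on the Lie algebra level (the differential at $1$ is multiplication by $2$) together with the fact that the map is equivariant enough to conclude the local degree at $1$ equals the global degree. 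For $n = 7$ the unit octonions are not a group, but squaring $x \mapsto x \cdot x$ is still well-defined, real-analytic, and its differential at $1$ is again multiplication by $2$ on the tangent space (since $(1+v)^2 = 1 + 2v + v^2$ and $v^2$ is second order), and one argues the regular value $-1$ (or a nearby point) is hit with signed count $2$; alternatively invoke that the Hopf construction on $m$ gives a map $\sphere^{2n+1} \to \sphere^{n+1}$ of Hopf invariant $1$, which forces the relevant degree data. I would present whichever of these is cleanest; the rest of the argument is the two-line distance estimate above.
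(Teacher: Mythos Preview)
Your proposal is correct and follows exactly the paper's approach: restrict $f$ to the diagonal $\Delta\sphere^n$, note that this restriction is homotopic to the degree-$2$ squaring map, and apply Proposition~\ref{prop_o} together with the fact that $\Delta\sphere^n$ is isometric to $\sqrt{2}\,\sphere^n$. Your write-up is more detailed about justifying that the squaring map has degree $2$ (the paper simply asserts this from $m(x,x)=x^2$), but the argument is otherwise identical.
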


\begin{proof}
    Consider the restriction of $f$ on
    $\Delta\sphere^n = \{(x, x) : x \in \sphere^n \}$,
    the diagonal sphere.
    Since $m |_{\Delta\sphere^n}$ is of degree 2
    ($m(x, x) = x^2$)
    and since $f$ is homotopic to $m$,
    $f |_{\Delta\sphere^n}$ is a degree 2 map from $\Delta\sphere^n$ (isometric to $\sqrt{2} \sphere^n$) to $\sphere^n$.
    By Proposition \ref{prop_o}, $L_f(\Delta\sphere^n) \ge \sqrt{2}$,
and hence $L_f(\sphere^n \times \sphere^n) \ge \sqrt{2}$.
\end{proof}

\begin{prop}
    $L_m(\sphere^n \times \sphere^n) = \sqrt{2}$,
    that is, 
    the Lipschitz constant of $m$ is $\sqrt{2}$.
    \label{prop_upper_bound}
\end{prop}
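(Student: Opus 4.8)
Since Proposition~\ref{prop_at_least} (applied to $f = m$, which is trivially homotopic to $m$) already gives $L_m(\sphere^n \times \sphere^n) \ge \sqrt{2}$, it will suffice to prove the reverse inequality $L_m(\sphere^n \times \sphere^n) \le \sqrt{2}$. The plan is to bound the operator norm of the differential $dm$ at every point of $\sphere^n \times \sphere^n$ by $\sqrt{2}$, and then to invoke the standard fact that the Lipschitz constant of a $C^1$ map between Riemannian manifolds, measured with the geodesic distances, equals the supremum over all points of $\|dm_{(p,q)}\|$. The inequality $L_m \le \sup \|dm\|$ is the direction I need, and it follows by integrating $\|dm(\gamma')\|$ along a minimizing geodesic $\gamma$ joining any two prescribed points of $\sphere^n \times \sphere^n$, which exists since the product is a compact length space.

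To carry this out, I would regard $\sphere^n$ as the unit sphere in $\real^{n+1}$, with $\real^{n+1}$ ($n+1 = 2, 4, 8$) identified with the normed division algebra $\mathbb{C}$, $\mathbb{H}$, or $\mathbb{O}$. Multiplication in these algebras is $\real$-bilinear — distributive and compatible with real scalars — even in the non-associative octonion case, so differentiating the curve $t \mapsto p(t)\,q(t)$ and using only bilinearity gives
\[
    dm_{(p,q)}(v, w) = v q + p w,
\]
where $(v, w)$, with $v \perp p$ and $w \perp q$, is a typical tangent vector to $\sphere^n \times \sphere^n$ at $(p, q)$, of norm $\sqrt{|v|^2 + |w|^2}$. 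The key ingredient — and precisely the reason the dimensions are restricted to $n = 1, 3, 7$ — is the composition identity $|ab| = |a|\,|b|$ valid in a normed division algebra. Since $|p| = |q| = 1$, this yields $|vq| = |v|$ and $|pw| = |w|$, and then the triangle inequality together with the Cauchy--Schwarz inequality gives
\[
    |dm_{(p,q)}(v, w)| = |v q + p w| \le |v| + |w| \le \sqrt{2}\,\sqrt{|v|^2 + |w|^2}.
\]
Hence $\|dm_{(p,q)}\| \le \sqrt{2}$ for every $(p, q)$, so $L_m(\sphere^n \times \sphere^n) \le \sqrt{2}$, and combined with Proposition~\ref{prop_at_least} this gives the claimed equality.

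I do not expect a genuine obstacle here: the heart of the argument is the one-line computation of $dm_{(p,q)}$ together with the normed-division-algebra identity $|ab| = |a|\,|b|$. The only two points that deserve a little care are (i) checking that the product rule for $dm$ really survives the loss of associativity — it does, since only bilinearity is used — and (ii) the passage from the pointwise bound $\|dm\| \le \sqrt{2}$ to the global Lipschitz bound, which relies on $\sphere^n \times \sphere^n$ being a length space so that distances are realized by paths.
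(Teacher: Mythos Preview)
Your proof is correct and follows essentially the same route as the paper: both bound $|dm_{(p,q)}(v,w)|$ by $|vq|+|pw|=|v|+|w|\le\sqrt{2}\sqrt{|v|^2+|w|^2}$ using the normed-division-algebra identity (phrased in the paper as ``$m|_{\sphere^n\times\{y\}}$ is an isometry''), and then invoke Proposition~\ref{prop_at_least} for the matching lower bound. The only difference is cosmetic---you write out the product-rule formula $dm_{(p,q)}(v,w)=vq+pw$ explicitly and comment on the passage from the pointwise derivative bound to the global Lipschitz bound, both of which the paper leaves implicit.
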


\begin{proof}
    For any $(x, y) \in \sphere^n \times \sphere^n$,
    we have an orthogonal decomposition
    \begin{align*}
    T_{(x, y)} \left(\sphere^n \times \sphere^n\right)= T_x \left(\sphere^n \times \set{y}\right) \oplus T_y \left(\set{x} \times \sphere^n \right).
    \end{align*}

    Since $m|_{ \sphere^n \times \set{y} } $is an isometry, 
    $dm|_{T_x \left(\sphere^n \times \set{y} \right)}$ is also an isometry.
    Similarly,
    $dm|_{T_y \left(\set{x} \times \sphere^n\right)}$ is an isometry.

    For any $X \in T_x \left(\sphere^n \times \set{y}\right)$ and
    $Y \in T_y \left(\set{x} \times \sphere^n\right)$,
    \begin{align*}
	\left| dm(X + Y) \right|
	&\le |dm(X)| + |dm(Y)|\\
	&= |X| + |Y|\\
	&\le \sqrt{2} \sqrt{|X|^2 + |Y|^2}\\
	&= \sqrt{2} |X + Y|.
    \end{align*}
    Hence $L_m(\sphere^n \times \sphere^n) \le \sqrt{2}$.
    By Proposition \ref{prop_at_least},
    we also have $L_m(\sphere^n \times \sphere^n) \ge \sqrt{2}$,
    and hence $L_m(\sphere^n \times \sphere^n) = \sqrt{2}$.
\end{proof}

\subsection{Uniqueness: plan of the proof}
Now $m$ is a Lipschitz constant minimizer in its homotopy class,
and it remains to show the uniqueness.

In this section,
suppose that $f$ is also of Lipschitz constant $\sqrt{2}$,
then we need to prove that $f$ and $m$ are isometric.

Our plan is as following.
\begin{enumerate}
    \item Show that the fibers of $f$ are
	parallel spheres isometric to $\sqrt{2} \sphere^n$.
	\label{step_plan_uniq_1}
    \item Use this result to prove that $f$ is isometric to $m$.
\end{enumerate}

\subsection{Basic tools}
The main tool in the second step is the theory on isoclinic planes and Cliffold algbra,
and the main tool in the first step will be the following inequalities.
\begin{prop}
    Suppose $f : \sphere^n \times \sphere^n \rightarrow \sphere^n$
    is homotopic to $m$ and of Lipschitz constant $\sqrt{2}$
    and suppose $p \in \sphere^n$,
    then for any $(x_1, y_1)$ and $(x_2, y_2) \in f^{-1}(p)$,
    we have
    \begin{align}
	\left(2 \pi - d(x_1, x_2)\right)^2 + \left( d(y_1, y_2) \right)^2 \ge 2 \pi^2,
	\label{eq_neq1}
    \end{align}
    and
    \begin{align}
	\left(d(x_1, x_2)\right)^2 + \left(2 \pi - d(y_1, y_2) \right)^2 \ge 2 \pi^2.
	\label{eq_neq2}
    \end{align}
    \label{prop_neq}
\end{prop}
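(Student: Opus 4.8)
The inequalities (\ref{eq_neq1}) and (\ref{eq_neq2}) say, roughly, that two points in the same fiber of $f$ cannot both be close together in the first coordinate *and* close together in the second coordinate. The natural way to force such a statement is to exhibit, for any two fiber points $(x_1,y_1)$ and $(x_2,y_2)$, a path in $\sphere^n\times\sphere^n$ joining them whose $f$-image is a loop that is *not null-homotopic in a small ball*, so that its length must be bounded below, and then convert that length bound into the claimed quadratic inequality using the Lipschitz constant $\sqrt{2}$. Let me describe this in more detail.

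First I would recall that for the model map $m$, the fiber $m^{-1}(p)$ is the set $\{(x, x^{-1}p) : x\in\sphere^n\}$ (using the group/loop structure; for $n=7$ one must be a little careful but the relevant identities $m(x,\cdot)$ and $m(\cdot,y)$ being isometries, already used in Proposition \ref{prop_upper_bound}, are what matter). So the fiber is a graph over the first $\sphere^n$-factor, swept out as $x$ traverses $\sphere^n$; traversing a great circle in $x$ from $x_1$ to $-x_1$ and back gives a loop in the fiber whose projection to the first factor has degree related to $2$. The key geometric input is: if $\gamma$ is a loop in $\sphere^n\times\sphere^n$ lying in a single fiber $f^{-1}(p)$, then $f\circ\gamma$ is a constant loop, but more usefully, concatenating a path inside the fiber with a short path *outside* it produces a loop in $\sphere^n$ whose homotopy class is controlled by the topology of $f$ (which equals that of $m$). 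Concretely, I would fix $(x_1,y_1),(x_2,y_2)\in f^{-1}(p)$ and build a loop in $\sphere^n$ as follows: go from $x_1$ to $x_2$ along a minimizing geodesic in the first factor (length $d(x_1,x_2)$) while holding $y$ fixed at $y_1$ — wait, that leaves the fiber. Better: use the ``outer'' structure. Move in the first coordinate from $x_1$ all the way around: from $x_1$ to $-x_1$ to $x_1$...

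Let me restart the construction. The cleanest route, and the one I expect the author takes, is: compose $f$ with the two ``slice'' inclusions. For fixed $y$, the map $x\mapsto f(x,y)$ is homotopic to $x\mapsto m(x,y)$, which is an \emph{isometry} of $\sphere^n$ (hence degree $\pm 1$); similarly for fixed $x$. Now given $(x_1,y_1),(x_2,y_2)\in f^{-1}(p)$, consider the rectangular loop in $\sphere^n\times\sphere^n$: go from $(x_1,y_1)$ to $(x_2,y_1)$ along a minimizing geodesic in the first factor (length $d(x_1,x_2)$), then from $(x_2,y_1)$ to $(x_2,y_2)$ in the second factor (length $d(y_1,y_2)$), then close up \emph{the long way}: from $(x_2,y_2)$ to $(x_1,y_2)$ the long way around in the first factor (length $2\pi - d(x_1,x_2)$), then from $(x_1,y_2)$ back to $(x_1,y_1)$ the long way in the second factor (length $2\pi - d(y_1,y_2)$). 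Applying $f$: the first and third segments together, since $f(x_1,y_1)=f(x_2,y_2)=p$, and using the degree-$1$ behavior of the slices, the $f$-image of the first-plus-third portion is a loop in $\sphere^n$ based at $p$ that wraps once. To make this sharp, I would cut the rectangle into two triangles (or rather, split along the ``diagonal'' path from $(x_1,y_1)$ to $(x_2,y_2)$, which lies in the fiber and so maps to the constant $p$), getting two loops each of which, after applying $f$, is a loop through $p$ that is homotopically nontrivial, hence of length $\ge 2\pi$ in $\sphere^n$.

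Now the arithmetic. For the first triangle, the sides project under $f$ to: a path of length $\le \sqrt{2}\,d(x_1,x_2)$ (image of the first-factor geodesic, but actually since the slice $x\mapsto f(x,y_1)$ is $1$-Lipschitz — because $dm$ is an isometry on slices and $f$ has the same Lipschitz bound... no, $f$ is only $\sqrt 2$-Lipschitz overall). Here I would instead use that the first-factor slice of $f$ is homotopic to an isometry, so the \emph{image loop} in $\sphere^n$ is a degree-$\pm1$ loop; combined with the constant-$p$ diagonal, the resulting based loop has winding forcing length $\ge$ something like $2\pi$. Matching the two sides of each triangle against the $\sqrt 2$-Lipschitz bound on the \emph{non-diagonal} edges: one triangle gives $\sqrt{2}\,d(x_1,x_2) + \sqrt{2}\,d(y_1,y_2)\ge$ (nontrivial loop length), the other gives $\sqrt{2}(2\pi - d(x_1,x_2)) + \sqrt{2}(2\pi-d(y_1,y_2))\ge$ (nontrivial loop length). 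But these linear inequalities are weaker than the stated quadratic ones, so the real content must come from a \emph{quadratic} comparison. The right move: the diagonal path from $(x_1,y_1)$ to $(x_2,y_2)$ has length $\sqrt{d(x_1,x_2)^2 + d(y_1,y_2)^2}$ in the product metric (for suitable choice of the diagonal — a geodesic in the product), while its $f$-image is the single point $p$; combining this geodesic with a \emph{second} geodesic from $(x_2,y_2)$ to $(x_1,y_1)$ going ``around'' in one or both factors, whose $f$-image is a nontrivial loop of length $\ge 2\pi$, and whose product-length is $\sqrt{(2\pi - d(x_1,x_2))^2 + d(y_1,y_2)^2}$ (resp.\ with the roles swapped), gives via $L_f=\sqrt2$ exactly
\[
\sqrt{2}\cdot\sqrt{(2\pi - d(x_1,x_2))^2 + d(y_1,y_2)^2}\;\ge\;2\pi,
\]
which on squaring is precisely (\ref{eq_neq1}); and the symmetric choice gives (\ref{eq_neq2}).

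So the skeleton is: (i) identify fibers of $m$ and the fact that first/second factor slices of $f$ are homotopic to isometries; (ii) for two given fiber points, construct a non-contractible loop in $\sphere^n$ as $f$ applied to a specific ``geodesic biangle'' in $\sphere^n\times\sphere^n$ — one arc a product-geodesic between the two fiber points (mapping to a point), the other arc a product-geodesic that ``goes the long way'' in the first factor — using that $f$ has degree behavior matching $m$ to see the resulting loop wraps nontrivially; (iii) bound the length of that loop below by $2\pi$ (shortest noncontractible loop in $\sphere^n$, $n\ge1$ — note $\pi_1$ is what matters for $n=1$ and for $n=3,7$ we instead use that a nontrivial element needs length... actually for $n\ge 2$, $\sphere^n$ is simply connected, so I must be more careful: the correct statement is that $f$ applied to the \emph{closed} biangle, which bounds a disk, must be null-homotopic, giving instead an upper bound — this is the subtlety).

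\textbf{The main obstacle.} The genuinely delicate point, and where I'd spend the most care, is the $n = 3, 7$ case: there $\sphere^n$ is simply connected, so ``noncontractible loop'' is vacuous and the length-$\ge 2\pi$ lower bound cannot come from $\pi_1$. Instead the bound must be extracted from the degree-$2$ phenomenon already exploited in Proposition \ref{prop_at_least}: restricting $f$ to an appropriate $2$-sphere's worth of such biangles, or to the diagonal, to see a degree-$2$ map to $\sphere^n$ and invoking Proposition \ref{prop_o} ($L\ge 2$) on a suitable submanifold. Making the biangle construction compatible with the degree-$2$ map on the diagonal $\Delta\sphere^n\cong\sqrt2\,\sphere^n$ — so that the factor $\sqrt2$ and the target distance $2\pi$ (the ``diameter doubling'' behind $L\ge 2$) produce exactly $2\pi^2$ on the right-hand side — is the crux. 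I expect the argument to run: along the path in the fiber the two points are joined; ``doubling'' that path around produces a loop whose $f$-image covers $\sphere^n$ to degree $2$, so by Proposition \ref{prop_o}-type reasoning its $f$-image has length $\ge 2\cdot(\text{something})$, and dividing by $L_f = \sqrt2$ and doing the Pythagorean bookkeeping in the product metric yields (\ref{eq_neq1})--(\ref{eq_neq2}). Verifying that the loop genuinely has the right degree — and handling the octonionic $n=7$ case where there is no associativity to lean on — is the part I would treat most carefully.
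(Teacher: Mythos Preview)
Your approach has a genuine gap that you partly recognize but never close. The core of your argument is to take a single product geodesic from $(x_1,y_1)$ to $(x_2,y_2)$ going the long way in one factor, observe that its $f$-image is a loop at $p$, and bound that loop's length below by $2\pi$. As you note, for $n=3,7$ the target $\sphere^n$ is simply connected, so no loop carries a length obstruction; your proposed rescue via the degree-$2$ map and Proposition~\ref{prop_o} does not apply, since that result concerns maps $\sphere^n\to\sphere^n$, not one-dimensional curves, and you give no mechanism to promote your biangle to such a map. Even for $n=1$ there is a slip: you assert that the short diagonal geodesic between the two fiber points ``lies in the fiber and so maps to the constant $p$'', but a geodesic joining two points of $f^{-1}(p)$ need not stay in $f^{-1}(p)$; without that, you cannot isolate the long arc's image and the inequality you get is strictly weaker than \eqref{eq_neq1}--\eqref{eq_neq2}.

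The paper's argument is quite different and works entirely in the \emph{domain}, avoiding loops in $\sphere^n$ altogether. Instead of one curve, it builds an $n$-dimensional cycle $S\subset\sphere^n\times\sphere^n$ through $(x_1,y_1)$ and $(x_2,y_2)$, swept out by curves $\gamma_X(t)=(\beta(t),\alpha_X(t))$ where $\beta$ is a minimizing geodesic $x_1\to x_2$ and the arcs $\alpha_X$ (one for each unit tangent direction at $y_1$) together fill out the second factor $\sphere^n$. This $S$ is homologous to $\{x_1\}\times\sphere^n$, hence by intersection theory (using $f\simeq m$) must meet $f^{-1}(p')$, where $p'=-p$. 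On the other hand each $\gamma_X$ has length at most $\sqrt{d(x_1,x_2)^2+(2\pi-d(y_1,y_2))^2}$, so if \eqref{eq_neq2} failed, every point of $S$ would lie within $\pi/\sqrt{2}$ of one of the two endpoints, giving $S\subset N(f^{-1}(p),\pi/\sqrt{2})$. But the $\sqrt{2}$-Lipschitz bound forces $N(f^{-1}(p),\pi/\sqrt{2})\cap f^{-1}(p')=\emptyset$, a contradiction. The topological input is homological intersection in $\sphere^n\times\sphere^n$ rather than $\pi_1$ of the target, which is exactly what makes the argument uniform across $n=1,3,7$ and is the idea missing from your outline.
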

\begin{figure}[h]
    \centering
    \includegraphics{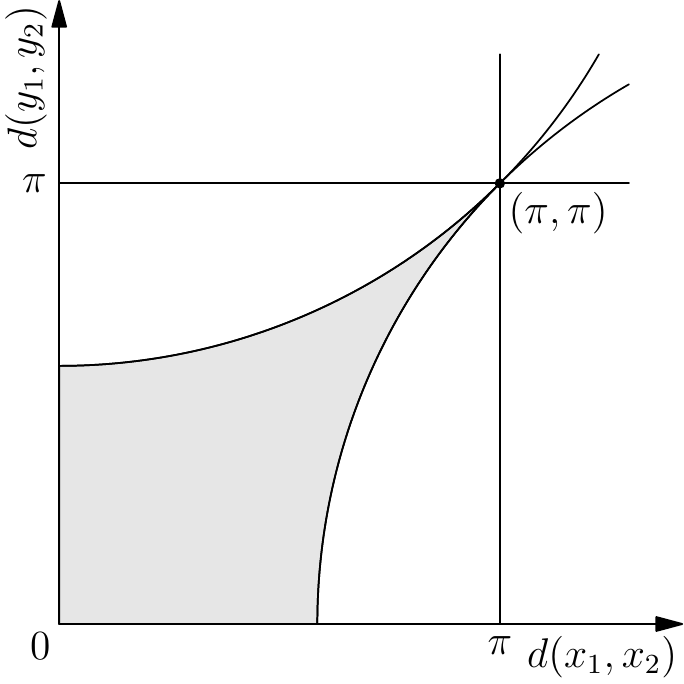}
    \caption{$(x_1, y_1)$ and $(x_2, y_2)$ are in a same fiber of $f$ only if $(d(x_1, x_2), d(y_1, y_2))$ is in the shaded region.}
\label{fig_neq}
\end{figure}
\begin{rem}
    We can see Proposition \ref{prop_neq} in Figure \ref{fig_neq}.
    The shaded region in Figure \ref{fig_neq}
    is the set of points $(d(x_1, x_2), d(y_1, y_2))$ 
    satisfying \eqref{eq_neq1}, \eqref{eq_neq2},
    $0 \le d(x_1, x_2) \le \pi$ and $0 \le d(y_1, y_2) \le \pi$.
    In other words,
    $(x_1, y_1)$ and $(x_2, y_2)$ are in a same fiber of $f$ only if $(d(x_1, x_2), d(y_1, y_2))$ is in the shaded region.

    If $x_1$ and $x_2$ are antipodal points,
    that is,
    $d(x_1, x_2) = \pi$,
    then we can see from the graph instantly that
    $y_1$ and $y_2$ are also antipodal points,
    that is,
    $d(y_1, y_2) = \pi$.
    Moreover,
    $(\pi - d(x_1, x_2)) / (\pi - d(y_1, y_2))$
    is close to $1$
    when $d(x_1, x_2)$ is close to $\pi$,
    which will allow us to prove that $d(x_1, x_2) = d(y_1, y_2)$
    with a little bit more effort.
\end{rem}

The following lemma shows that the inverse images of a pair of antipodal points can not be too close,
which can be used to prove Proposition \ref{prop_neq}.
\begin{lem}
    Suppose $p, p' \in \sphere^n$ are antipodal points,
    and suppose the Lipschitz constant of
    $f : \sphere^n \times \sphere^n \rightarrow \sphere^n$
    is $\sqrt{2}$,
    then
    \begin{align}
	N(f^{-1}(p), \frac{\pi}{\sqrt{2}} ) \bigcap f^{-1}(p') = \emptyset,
	\label{eq_far}
    \end{align}
    where
    \begin{align*}
	N(f^{-1}(p), \frac{\pi}{\sqrt{2}} ) = \{(x, y) \in \sphere^n \times \sphere^n : d((x, y), f^{-1}(p)) < \frac{\pi}{\sqrt{2}} \}
    \end{align*}
    is the $\frac{\pi}{\sqrt{2}}$-neighborhood of $f^{-1}(p)$.
    \label{lem_far}
\end{lem}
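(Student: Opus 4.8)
The plan is to argue by contradiction directly from the definition of the Lipschitz constant; this lemma should need only a couple of lines. Recall that $\sphere^n \times \sphere^n$ carries the Riemannian product metric, so that $d\bigl((x_1,y_1),(x_2,y_2)\bigr)^2 = d(x_1,x_2)^2 + d(y_1,y_2)^2$; in particular its diameter is $\sqrt{2}\,\pi$, and $\pi/\sqrt{2}$ is exactly half of that.

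Suppose the conclusion fails, so that some $(x, y)$ lies in $N\bigl(f^{-1}(p), \pi/\sqrt{2}\bigr) \cap f^{-1}(p')$. By definition of the neighborhood we have $d\bigl((x,y), f^{-1}(p)\bigr) < \pi/\sqrt{2}$, so we may pick $(x_0, y_0) \in f^{-1}(p)$ with $d\bigl((x,y),(x_0,y_0)\bigr) < \pi/\sqrt{2}$. Since $f$ has Lipschitz constant $\sqrt{2}$,
\[
d\bigl(f(x,y), f(x_0,y_0)\bigr) \le \sqrt{2}\, d\bigl((x,y),(x_0,y_0)\bigr) < \sqrt{2}\cdot \frac{\pi}{\sqrt{2}} = \pi .
\]
But $f(x,y) = p'$ and $f(x_0,y_0) = p$ are antipodal, so $d(p,p') = \pi$ --- a contradiction. (Equivalently, the argument shows that $f^{-1}(p)$ and $f^{-1}(p')$ are at distance at least $\pi/\sqrt{2}$.)

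I do not anticipate any genuine obstacle here. The only points worth stating with care are the metric convention on the product --- which is what pins down $\pi/\sqrt{2}$ as the constant matched to the Lipschitz bound $\sqrt{2}$ and the antipodal distance $\pi$ --- and the fact that the neighborhood is \emph{open}, so that the Lipschitz estimate yields the strict inequality $<\pi$ needed to contradict $d(p,p')=\pi$. The real work lies downstream, in using this separation statement to extract the quantitative inequalities \eqref{eq_neq1} and \eqref{eq_neq2} of Proposition \ref{prop_neq}.
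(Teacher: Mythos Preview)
Your proof is correct and is essentially the same as the paper's: both simply apply the Lipschitz bound $d(p,p') \le \sqrt{2}\, d\bigl((x,y),(x_0,y_0)\bigr)$ to conclude that any point of $f^{-1}(p')$ lies at distance at least $\pi/\sqrt{2}$ from $f^{-1}(p)$. The paper phrases this as a direct inequality rather than a contradiction, but the content is identical.
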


\begin{proof}
    For any $(x_1, y_1) \in f^{-1}(p)$ and
    $(x'_1, y'_1) \in f^{-1}(p')$,
    since
    \begin{align*}
	d(f(x_1, y_1), f(x'_1, y'_1)) \le \sqrt{2} d( (x_1, y_1), (x'_1, y'_1) ),
    \end{align*}
    \begin{align*}
	d( (x_1, y_1), (x'_1, y'_1) ) \ge \frac{d(f(x_1, y_1), f(x'_1, y'_1))}{\sqrt{2}} = \frac{\pi}{\sqrt{2}}.
    \end{align*}
\end{proof}

\begin{proof}[Proof of Proposition \ref{prop_neq}]
    Lemma \ref{lem_far} implies that
    the complement of $N(f^{-1}(p), \frac{\pi}{\sqrt{2}} )$
    contains $f^{-1}(p')$,
    which intersect cycles in the homology class of $\set{x} \times \sphere^n$.
    If $(x_1, y_1)$ and $(x_2, y_2)$ are in $f^{-1}(p)$ but they do not satisfy
    \eqref{eq_neq1} or \eqref{eq_neq2},
    then we can construct a cycle
    \begin{enumerate}
	\item within the same homology class as $\set{x} \times \sphere^n$ (and hence intersecting $f^{-1}(p')$),
	\item and lying in $N(f^{-1}(p), \frac{\pi}{\sqrt{2}} )$.
    \end{enumerate}
    This will contradict Lemma \ref{lem_far}.

    The cycle in $\sphere^n \times \sphere^n$ which we need to construct
    will be a topological sphere which contains $(x_1, y_1)$
    and $(x_2, y_2)$.
    Its projection to the first $\sphere^n$ will be
    a shortest geodesic from $x_1$ to $x_2$,
    and its projection to the second $\sphere^n$
    will be the full $\sphere^n$.

    \textbf{The first step in the construction:
    break $\sphere^n$ down to
a family of curves from $y_1$ to $y_2$}.
    The curves will be parametrized by the unit tangent vectors
    $U_{y_1} \sphere^n$.
    For each unit tangent vector $X \in U_{y_1} \sphere^n$,
    there is a unique $2$-plane spanned
    by $y_1$, $y_2$, and $X$ in $\real^{n+1}$.
    The intersection of this $2$-plane and $\sphere^n$
    will be a circle containing $y_1$ and $y_2$,
    thus two simple curves $\alpha_X$ and $\alpha_{-X}$ from $y_1$ to $y_2$,
    where the direction of $\alpha_X$ is the same as $X$,
    and where the direction of $\alpha_{-X}$ is opposite to $X$.
    (See Figure \ref{fig_curve}.)
    We can further specify that $\alpha_X : [0, 1] \rightarrow \sphere^n$
    is of constant speed.
    \begin{figure}[h]
	\includegraphics{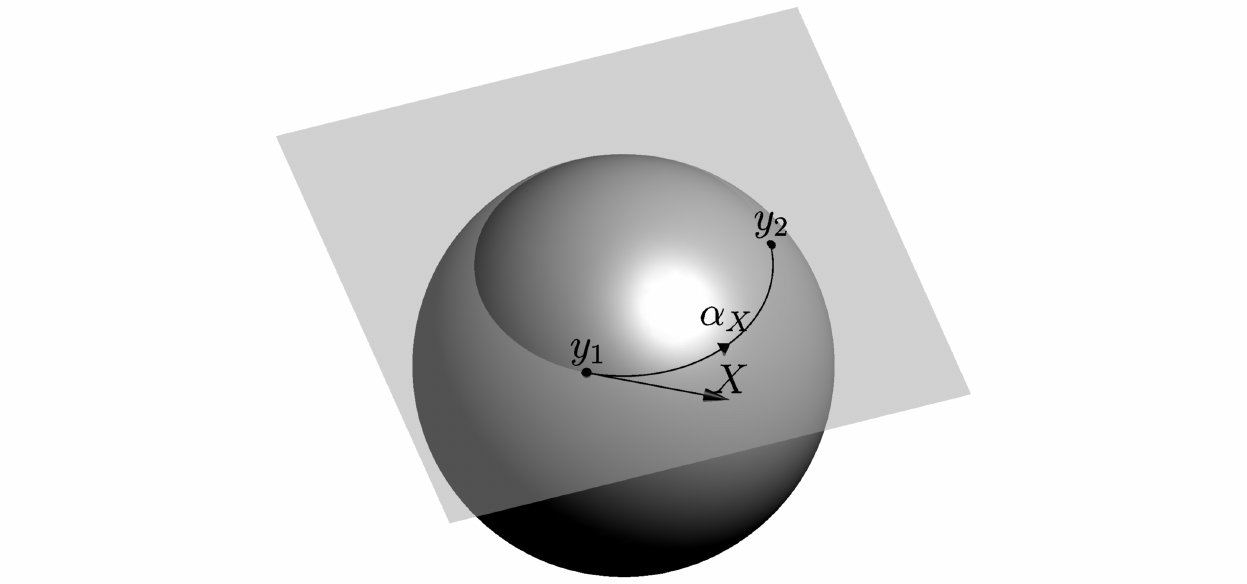}
	\caption{$\alpha_X : [0, 1] \rightarrow \sphere^n$ is an arc from $y_1$ to $y_2$ which is tangent to $X$.}
	\label{fig_curve}
    \end{figure}

    \textbf{The second step in the construction:}
    let $\beta : [0, 1] \rightarrow \sphere^n$ be a shortest geodesic
    from $x_1$ to $x_2$,
    and let $\gamma_X(t) = (\beta(t), \alpha_X(t))$,
    then the cycle we need is
    \begin{align*}
	S = \{\gamma_X(t) : X \in U_{y_1} \sphere^n, t \in [0, 1] \}.
	\label{}
    \end{align*}

    Now we can verify that $S$ has
    the desired homology,
    which will imply that
    $f^{-1}(p') \bigcap S \neq \emptyset$.
    When $x_1 = x_2$ and when $y_1$ and $y_2$ are antipodal points,
    $S$ is exactly $\set{x_1} \times \sphere^n$.
    If we move $x_2$ or $y_2$ continuously,
    $S$ is also deformed continuously.
    Hence for any $(x_1, y_1)$ and $(x_2, y_2)$,
    $S$ and $\set{x_1} \times \sphere^n$ are in the same homology class.
    Since $f$ and $m$ are homotopic,
    $f^{-1}(p')$ and $m^{-1}(p')$ have the same homology class.
    As $m^{-1}(p')$ and $\set{x_1} \times \sphere^n$ have exactly one intersection point $(x_1, x_1^{-1} p')$,
    \begin{align}
	f^{-1}(p') \bigcap S \neq \emptyset.
	\label{eq_int}
    \end{align}

    The last step is to estimate $\ell(\gamma_X)$, the length of $\gamma_X$.
    If the inequality \eqref{eq_neq2} is violated,
    then the estimate will imply that $S \subset N(f^{-1}(p), \frac{\pi}{\sqrt{2}} )$.
    Since $\ell(\gamma_X) = \sqrt{\left( \ell(\alpha_X) \right)^2 + \left( \ell(\beta) \right)^2}$, we need to first estimate $\ell(\alpha_X)$ and $\ell(\beta)$.
    By our construction,
    for any $X \in U_{y_1} \sphere^n$,
    $\ell(\alpha_X) \le 2 \pi - d(y_1, y_2)$
    and $\ell(\beta) = d(x_1, x_2)$.
    Hence
    \begin{align}
	\ell(\gamma_X) \le \sqrt{\left( 2 \pi - d(y_1, y_2) \right)^2 + \left( d(x_1, x_2) \right)^2}.
	\label{eq_est}
    \end{align}
    If \eqref{eq_neq2} is violated,
    then \eqref{eq_est} implies $\ell(\gamma_X) < \sqrt{2} \pi$,
    and hence
    \begin{align}
	\gamma_X \subset N(\{(x_1, y_1), (x_2, y_2)\}, \frac{\pi}{\sqrt{2}} ) \subset N(f^{-1}(p), \frac{\pi}{\sqrt{2}} ).
	\label{eq_short}
    \end{align}
    As \eqref{eq_short} is true for any $\gamma_X$,
    we have
    \begin{align}
	S \subset N(f^{-1}(p), \frac{\pi}{\sqrt{2}} ).
	\label{eq_small}
    \end{align}
    Now \eqref{eq_int} and \eqref{eq_small} implies
    \begin{align}
	N(f^{-1}(p), \frac{\pi}{\sqrt{2}} ) \bigcap f^{-1}(p') \neq \emptyset,
	\label{}
    \end{align}
    which contradicts \eqref{eq_far}.
    Therefore \eqref{eq_neq2} holds.
    The proof for \eqref{eq_neq1} is similar.
    This completes the proof of Proposition \ref{prop_neq}.
\end{proof}

\subsection{Fibers of $f$ are spheres}
Here are some observations of Figure \ref{fig_neq},
and we shall solidify these ideas to prove that the fibers of $f$ are the graphs of isometries of $\sphere^n$.
As can be seen from Figure \ref{fig_neq},
$y_2$ is the antipodal point of $y_1$
if $x_2$ is the antipodal point of $x_1$.
In particular,
there is only one $y_2 \in \sphere^n$ such that $(x_2, y_2) \in f^{-1}(p)$.
Moreover,
$\pi - d(x_1, x_2)$ and $\pi - d(y_1, y_2)$ are roughly equal when $d(x_1, x_2)$ is close to $\pi$.
These observations leads to the proof of the following proposition.

\begin{prop}
    $f^{-1}(p)$ is the graph of a isometry $h_p : \sphere^n \rightarrow \sphere^n$ where $h_p(x) = y$ if and only if $(x, y) \in f^{-1}(p)$.
    \label{prop_iso}
\end{prop}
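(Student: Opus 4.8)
The plan is to first show that $f^{-1}(p)$ is the graph of an antipode‑preserving homeomorphism of $\sphere^n$, and then to promote that homeomorphism to an isometry by extracting an infinitesimal‑isometry estimate from Proposition \ref{prop_neq} and integrating it along geodesics. For the first part, observe that the projections $\pi_1,\pi_2 : f^{-1}(p)\to\sphere^n$ are surjective: for each $x$ the restriction $f|_{\set{x}\times\sphere^n}:\sphere^n\to\sphere^n$ is homotopic to the isometry $m|_{\set{x}\times\sphere^n}$, hence of degree $\pm 1$ and in particular hits $p$, and likewise with the two factors reversed. The key point is \emph{antipodal rigidity}: if $(x_1,y_1),(x_2,y_2)\in f^{-1}(p)$ and $d(x_1,x_2)=\pi$, then \eqref{eq_neq1} reduces to $\pi^2+d(y_1,y_2)^2\ge 2\pi^2$, forcing $d(y_1,y_2)=\pi$, and symmetrically \eqref{eq_neq2} gives the converse. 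From this, $\pi_1|_{f^{-1}(p)}$ is injective: given $(x_1,y_1),(x_1,y_1')\in f^{-1}(p)$, choose (by surjectivity of $\pi_1$) a point $(-x_1,w)\in f^{-1}(p)$; antipodal rigidity applied to the pairs $\set{(x_1,y_1),(-x_1,w)}$ and $\set{(x_1,y_1'),(-x_1,w)}$ yields $w=-y_1=-y_1'$. The same argument shows $\pi_2|_{f^{-1}(p)}$ is injective, so $f^{-1}(p)$ is the graph of a bijection $h_p:=\pi_2\circ(\pi_1|_{f^{-1}(p)})^{-1}$; this $h_p$ is a homeomorphism because $f^{-1}(p)$ is compact and $\sphere^n$ is Hausdorff, and applying antipodal rigidity to $\set{(x,h_p(x)),(-x,h_p(-x))}$ shows $h_p(-x)=-h_p(x)$.

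The second part turns on a single inequality. Fix $x_1,x_2\in\sphere^n$ and write $y_i=h_p(x_i)$, $\theta=d(x_1,x_2)$, $\phi=d(y_1,y_2)$. Since $h_p$ is antipode‑preserving, the pair $(-x_1,-y_1),(x_2,y_2)$ also lies in $f^{-1}(p)$, and $d(-x_1,x_2)=\pi-\theta$, $d(-y_1,y_2)=\pi-\phi$. Substituting these into \eqref{eq_neq1} and \eqref{eq_neq2} and simplifying gives, for \emph{every} pair $x_1,x_2\in\sphere^n$,
\begin{align}
\theta^2+\phi^2\ \ge\ 2\pi\,\lvert\theta-\phi\rvert. \label{eq_infinitesimal}
\end{align}

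Now take a minimizing geodesic $\beta$ from $x_1$ to $x_2$, set $z_i=\beta(i/N)$, $\epsilon=\theta/N$, and $\phi_i=d(h_p(z_i),h_p(z_{i+1}))$. Applying \eqref{eq_infinitesimal} to $(z_i,z_{i+1})$: when $\phi_i>\epsilon$ it reads $\phi_i^2-2\pi\phi_i+(\epsilon^2+2\pi\epsilon)\ge 0$, which together with $0\le\phi_i\le\pi$ forces $\phi_i\le\pi-\sqrt{\pi^2-2\pi\epsilon-\epsilon^2}=\epsilon+O(\epsilon^2)$ once $N$ is large, while the bound is trivial when $\phi_i\le\epsilon$. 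Hence $d(h_p(x_1),h_p(x_2))\le\sum_{i=0}^{N-1}\phi_i\le\theta+O(\theta^2/N)$, and letting $N\to\infty$ gives $d(h_p(x_1),h_p(x_2))\le d(x_1,x_2)$, so $h_p$ is distance non‑increasing. Because \eqref{eq_infinitesimal} is symmetric in the two spheres, the identical subdivision argument applied along geodesics in the target shows $h_p^{-1}$ is distance non‑increasing too; therefore $h_p$ is an isometry, as claimed.

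I expect the passage from \eqref{eq_infinitesimal} to the isometry conclusion to be the crux. By itself \eqref{eq_infinitesimal} only says $h_p$ distorts distances by $O(\theta^2)$ at scale $\theta$ — not even enough to bound the Lipschitz constant of $h_p$ — and what rescues the subdivision argument is precisely that \eqref{eq_infinitesimal} holds for \emph{all} pairs, which is what the antipode trick of the second paragraph buys; the quadratic errors then telescope away as the mesh shrinks.
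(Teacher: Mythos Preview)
Your proof is correct and follows essentially the same approach as the paper: surjectivity via degree, antipodal rigidity from \eqref{eq_neq1} at distance $\pi$, injectivity via the antipode trick, and then the key estimate $(\pi+\theta)^2+(\pi-\phi)^2\ge 2\pi^2$ (your \eqref{eq_infinitesimal} rearranged) obtained by applying Proposition~\ref{prop_neq} to the antipodal pair, giving $\phi\le\theta+o(\theta)$ and hence $L_{h_p}\le 1$, with the symmetric argument for $h_p^{-1}$. The only cosmetic difference is that the paper packages the passage from the infinitesimal estimate to the global Lipschitz bound as ``local Lipschitz constant $\le 1$ on a length space implies global Lipschitz constant $\le 1$'' (citing Gromov), whereas you unfold this into an explicit geodesic subdivision; the content is identical.
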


\begin{proof}
    $h_p$ is well defined if
    \begin{enumerate}
	\item for any $x \in \sphere^n$ there is $y \in \sphere^n$ such that $f(x, y) = p$,
	\item $f(x, y) = f(x, y'') = p$ implies $y = y''$.
    \end{enumerate}
    For any $x_1 \in \sphere^n$,
    since $m|_{\set{x_1} \times \sphere^n}$ is an isometry,
    $f|_{\set{x_1} \times \sphere^n}$ is also surjective.
    Hence there is $y_1 \in \sphere^n$ such that $f(x_1, y_1) = p$.

    Now let $x_1' \in \sphere^n$ be the antipodal point of $x_1$,
    then there is $y_1' \in \sphere^n$ such that $(x_1', y_1') \in f^{-1}(p)$.
    Since $d(x_1, x_1') = \pi$,
    \eqref{eq_neq1} implies $d(y_1, y_1') \ge \pi$,
    that is,
    $y_1'$ is the antipodal point of $y_1$.

    Suppose $(x_1, y_1'') \in f^{-1}(p)$.
    Since we also have $(x_1', y_1') \in f^{-1}(p)$,
    then $d(x_1, x'_1) = \pi$ and \eqref{eq_neq1} imply $d(y_1'', y_1') \ge \pi$,
    that is,
    $y_1''$ is the antipodal point of $y_1'$
    and hence $y_1'' = y_1$.
    Therefore $h_p$ is well defined.

    Similarly,
    we can define $k_p : \sphere^n \rightarrow \sphere^n$ as $k_p(y_1) = x_1$
    if and only if we have $(x_1, y_1) \in f^{-1}(p)$.
    Then $h_p \circ k_p$ and $k_p \circ h_p$ are identity maps,
    and hence $h_p$ is a bijection.

    Next, we shall prove that $L_{h_p}(\sphere^n) \le 1$.
    Since $(\sphere^n, d)$ is a length space,
    it suffices to show that the local Lipschitz constant of $h_p$ is $\le 1$ \cite{gromov-2007},
    i.e.,
    \begin{align}
	\limsup_{x_2 \rightarrow x_1} \frac{d(h(x_1), h(x_2))}{d(x_1, x_2)} \le 1.
	\label{eq_local_lip}
    \end{align}
    For any $(x_1, y_1), (x_2, y_2) \in f^{-1}(p)$,
    \eqref{eq_neq1} implies
    \begin{align*}
	\left(2 \pi - d(x'_1, x_2)\right)^2 + \left( d(y'_1, y_2) \right)^2 \ge 2 \pi^2,
	\label{}
    \end{align*}
    where $x'_1$ and $x'_2$ are the antipodal points of $x_1$ and $x_2$ respectively.
    Since $d(x'_1, x_2) = \pi - d(x_1, x_2)$
    and since $d(y'_1, y_2) = \pi - d(y_1, y_2)$,
    \begin{align*}
	\left(\pi + d(x_1, x_2)\right)^2 + \left(\pi - d(y_1, y_2) \right)^2 \ge 2 \pi^2.
	\label{}
    \end{align*}
    Hence
    \begin{align*}
	d(y_1, y_2)
	&\le \pi - \sqrt{2 \pi^2 - (\pi + d(x_1, x_2))^2}\\
	&= \pi - \sqrt{\pi^2 - 2 \pi d(x_1, x_2) + o(d(x_1, x_2))}\\
	&= d(x_1, x_2) + o(d(x_1, x_2)),
	\label{}
    \end{align*}
    which implies \eqref{eq_local_lip}.

    Similarly,
    $h_p^{-1} = k_p$ is also of Lipschitz constant at most $1$,
    and hence $h_p$ is an isometry.
\end{proof}

\subsection{Fibers of $f$ are parallel}
By now,
we know that the fibers of $f$ are graphs of isometries.
As stated before,
these fibers are also parallel.

\begin{prop}
    Fibers of $f$ are parallel. In other words,
    for any $p_1, p_2 \in \sphere^n$
    and $(x_1, y_1) \in f^{-1}(p_1)$,
    \begin{align}
	d( (x_1, y_1), f^{-1}(p_2)) = d(f^{-1}(p_1), f^{-1}(p_2)) = \frac{d(p_1, p_2)}{\sqrt{2}}.
	\label{eq_parallel}
    \end{align}
    \label{prop_parallel}
\end{prop}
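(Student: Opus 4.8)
The plan is to bypass any topological or variational argument and instead derive an \emph{exact} formula for the distance from a point to a fiber, exploiting the fact — already established in Proposition \ref{prop_iso} — that every fiber $f^{-1}(q)$ is the graph of an isometry $h_q:\sphere^n\to\sphere^n$. First, the lower bound in \eqref{eq_parallel} is free: if $z_i\in f^{-1}(p_i)$ then $d(p_1,p_2)=d(f(z_1),f(z_2))\le\sqrt2\,d(z_1,z_2)$ because $L_f(\sphere^n\times\sphere^n)=\sqrt2$, so $d(z,f^{-1}(p_2))\ge d(p_1,p_2)/\sqrt2$ for every $z\in f^{-1}(p_1)$, and likewise $d(f^{-1}(p_1),f^{-1}(p_2))\ge d(p_1,p_2)/\sqrt2$. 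The whole content of the proposition is therefore the reverse inequality, \emph{with the distance realized from every point of $f^{-1}(p_1)$}.

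The second step is to prove the identity
\begin{align*}
    d\big((x,y),f^{-1}(q)\big)=\tfrac{1}{\sqrt2}\,d\big(x,h_q^{-1}(y)\big),\qquad (x,y)\in\sphere^n\times\sphere^n,\ q\in\sphere^n.
\end{align*}
Indeed $f^{-1}(q)=\{(a,h_q(a)):a\in\sphere^n\}$, so $d((x,y),f^{-1}(q))=\min_a\sqrt{d(x,a)^2+d(y,h_q(a))^2}$; since $h_q$ is an isometry, $d(y,h_q(a))=d(h_q^{-1}(y),a)$, and the formula then reduces to the elementary fact that $\min_{a\in\sphere^n}\big(d(u,a)^2+d(v,a)^2\big)=\tfrac12 d(u,v)^2$, the minimum being attained at a midpoint of a shortest geodesic from $u$ to $v$ (off such geodesics $d(u,a)+d(v,a)>d(u,v)$, which makes the sum of squares strictly larger).

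The third step feeds the trivial lower bound back into this formula. Fix $y\in\sphere^n$ and, for $x_1,x_2\in\sphere^n$, put $p_i=f(x_i,y)$; then $x_2$ is the \emph{unique} point with $f(x_2,y)=p_2$, i.e.\ $x_2=h_{p_2}^{-1}(y)$. The identity gives $d((x_1,y),f^{-1}(p_2))=\tfrac1{\sqrt2}d(x_1,x_2)$, while the lower bound gives $d((x_1,y),f^{-1}(p_2))\ge\tfrac1{\sqrt2}d(p_1,p_2)$, whence $d(f(x_1,y),f(x_2,y))\le d(x_1,x_2)$: the slice map $f|_{\sphere^n\times\{y\}}$ is $1$-Lipschitz. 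By Proposition \ref{prop_iso} each fiber meets the slice $\sphere^n\times\{y\}$ in exactly one point, so $f|_{\sphere^n\times\{y\}}$ is a bijection of $\sphere^n$; and a $1$-Lipschitz surjection of $\sphere^n$ onto itself is an isometry (by the area formula it has unit Jacobian almost everywhere, forcing all its singular values to equal $1$). Hence $f|_{\sphere^n\times\{y\}}$ is an isometry for every $y$.

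Finally, given $(x_1,y_1)\in f^{-1}(p_1)$ and arbitrary $p_2$, set $x_2=h_{p_2}^{-1}(y_1)$, so $f(x_2,y_1)=p_2$; the identity of the second step gives $d((x_1,y_1),f^{-1}(p_2))=\tfrac1{\sqrt2}d(x_1,x_2)$, and since $f|_{\sphere^n\times\{y_1\}}$ is an isometry, $d(x_1,x_2)=d(f(x_1,y_1),f(x_2,y_1))=d(p_1,p_2)$. This gives $d((x_1,y_1),f^{-1}(p_2))=d(p_1,p_2)/\sqrt2$ for every point of $f^{-1}(p_1)$, and taking the infimum over $(x_1,y_1)\in f^{-1}(p_1)$ yields $d(f^{-1}(p_1),f^{-1}(p_2))=d(p_1,p_2)/\sqrt2$ as well. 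I expect the only delicate point to be the last assertion of the third step — that a $1$-Lipschitz bijection of $\sphere^n$ is an isometry — which is a standard fact but is the one place where an external ingredient beyond Proposition \ref{prop_iso} and the value $\sqrt2$ of the Lipschitz constant enters; the decisive idea is simply that the graph-of-an-isometry description of the fibers promotes the trivial distance bound into the exact identity of the second step.
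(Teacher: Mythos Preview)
Your argument is correct and genuinely different from the paper's. The paper normalizes so that $f^{-1}(1)=m^{-1}(1)$, invokes the Hopf-fibration extension of $m$ (and the reference \cite{gluck-warner-ziller-1986}) to compute $d\big((x,y),m^{-1}(1)\big)$ exactly, uses Lemma~\ref{lem_far} to pin down $f^{-1}(-1)=m^{-1}(-1)$, and then finishes with a triangle-inequality squeeze between the two antipodal fibers. Your route is more self-contained: once Proposition~\ref{prop_iso} is in hand, the closed-form identity $d\big((x,y),f^{-1}(q)\big)=\tfrac{1}{\sqrt2}\,d\big(x,h_q^{-1}(y)\big)$ together with the trivial Lipschitz lower bound forces each horizontal slice $f|_{\sphere^n\times\{y\}}$ to be a $1$-Lipschitz bijection of $\sphere^n$, hence an isometry, from which \eqref{eq_parallel} drops out. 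This avoids any appeal to the Hopf fibration or to \cite{gluck-warner-ziller-1986}, which is a real simplification.

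One small point to tighten: your parenthetical justification that a $1$-Lipschitz surjection $\phi:\sphere^n\to\sphere^n$ is an isometry (``unit Jacobian a.e.\ forces all singular values to equal $1$'') does show $d\phi$ is orthogonal almost everywhere, but passing from that to a \emph{metric} isometry needs one more sentence, since a geodesic is a null set. The quickest fix is to cite the standard fact that any distance-nonincreasing surjection of a compact metric space onto itself is an isometry (iterate $\phi$, use Arzel\`a--Ascoli to find $\phi^m$ uniformly close to the identity, and conclude $d(\phi(x),\phi(y))\ge d(x,y)-2\epsilon$); this requires neither differentiability nor the area formula.
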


\begin{proof}
    By symmetry,
    we will assume $p_2 = 1$ without loss of generality.
    (Here we view points in $\sphere^n$ as unit quaternions.)

    Moreover,
    $f^{-1}(1)$ is the graph of an isometry $h_1 : \sphere^n \rightarrow \sphere^n$,
    by Proposition \ref{prop_iso}.
    Define an isometry $H_1 : \sphere^n \times \sphere^n \rightarrow \sphere^n \times \sphere^n$
    as $H_1(x, y) = (x^{-1}, h_1^{-1}(y))$,
    then $H_1(f^{-1}(1)) = m^{-1}(1)$.
    So we will assume $f^{-1}(1) = m^{-1}(1)$ without loss of generality.

    When $f = m$, \eqref{eq_parallel} becomes
    \begin{align}
	d( (x_1, y_1), m^{-1}(1)) = \frac{d(p_1, 1)}{\sqrt{2}}.
	\label{eq_special}
    \end{align}
    Since $m$ extends to the (scaled) Hopf fibrations $\tilde{m} : \sqrt{2} \sphere^{2n+1} \rightarrow \sphere^{n+1}$ defined as
    \begin{align}
	\tilde{m}(z_1, z_2) = (z_1 z_2, \frac{|z_1|^2 - |z_2|^2}{2}),
	\label{eq_hopf}
    \end{align}
    \eqref{eq_special} follow easilly from \cite{gluck-warner-ziller-1986}.

    Next, we shall use this special case to prove this proposition.
    \eqref{eq_special} implies that
    \begin{align*}
	N(m^{-1}(1), \frac{\pi}{\sqrt{2}})
	= m^{-1}(N(\{1\}, \pi)) = m^{-1}(\sphere^n \setminus \{-1\}),
    \end{align*}
    and thus
    \begin{align*}
	N(f^{-1}(1), \frac{\pi}{\sqrt{2}}) = m^{-1}(\sphere^n \setminus \{-1\}).
    \end{align*}
    Taking $p = 1$ in Lemma \ref{lem_far},
    we have
    $f^{-1}(-1) \bigcap N(f^{-1}(1), \frac{\pi}{2}) = \emptyset$,
    and hence
    \begin{align*}
	f^{-1}(-1) &\subset \sphere^3 \times \sphere^3 \setminus N(f^{-1}(1), \frac{\pi}{2})\\
	&= \sphere^3 \times \sphere^3 \setminus m^{-1}(\sphere^3 \setminus \{-1\})\\
	&= m^{-1}(-1).
    \end{align*}
    Recall that $f^{-1}$ is the graph of an isometry,
    so it can only be $m^{-1}(-1)$.
    This proves \eqref{eq_parallel} with $p_1 = -1$ and $p_2 = 1$,
    and it remains to verify that other fibers are also parallel to $f^{-1}(1)$.

    Suppose any $p_1 \in \sphere^n$
    and $(x_1, y_1) \subset f^{-1}(p_1)$.
    Since $f$ is of Lipschitz constant $\sqrt{2}$,
    \begin{align}
	d( (x_1, y_1), f^{-1}(1) ) \ge \frac{d(p_1, 1)}{\sqrt{2}},
	\label{eq_ge1}
    \end{align}
    and
    \begin{align}
	d( (x_1, y_1), f^{-1}(-1) ) \ge \frac{d(p_1, -1)}{\sqrt{2}}.
	\label{eq_ge2}
    \end{align}
    On the other hand,
    since $(x_1, y_1) \in M^{-1}(x_1 y_1)$ and $f^{-1}(1) = m^{-1}(1)$,
    \eqref{eq_special} implies
    \begin{align}
	d( (x_1, y_1), f^{-1}(1) ) = \frac{d(x_1 y_1, 1)}{\sqrt{2}},
	\label{eq_eq1}
    \end{align}
    and similarly
    \begin{align}
	d( (x_1, y_1), f^{-1}(-1) ) = \frac{d(x_1 y_1, -1)}{\sqrt{2}}.
	\label{eq_eq2}
    \end{align}
    By \eqref{eq_ge1}, \eqref{eq_ge2}, \eqref{eq_eq1} and \eqref{eq_eq2},
    \begin{align*}
	\frac{d(-1, 1)}{\sqrt{2}}
	&= \frac{d(x_1 y_1, 1)}{\sqrt{2}} + \frac{d(x_1 y_1, -1)}{\sqrt{2}}\\
	&= d( (x_1, y_1), f^{-1}(1) ) + d( (x_1, y_1), f^{-1}(-1) )\\
	&\ge \frac{d(p_1, 1)}{\sqrt{2}} + \frac{d(p_1, -1)}{\sqrt{2}}\\
	&= \frac{d(-1, 1)}{\sqrt{2}}.
    \end{align*}
    Thus all the inequalities in the above equations should be equalities,
    so 
    \begin{align*}
	d( (x_1, y_1), f^{-1}(1) ) = \frac{d(p_1, 1)}{\sqrt{2}}.
    \end{align*}
    This completes the proof of Proposition \ref{prop_parallel}.
\end{proof}

\subsection{Proof of the uniqueness}
We can embed $\sphere^n \times \sphere^n$ into $\sqrt{2} \sphere^{2n + 1} \subset \real^{2n + 2} = \real^{n+1} \times \real^{n+1}$ by embeding each $\sphere^n$ into $\real^{n+1}$.
Proposition \ref{prop_iso} implies that
every fiber of $f$ lies in a \\$(n+1)$-plane in $\real^{2n+2}$,
so these fibers are great $n$-spheres in $\sqrt{2} \sphere^{2n+1}$.
Also, Proposition \ref{prop_parallel} implies that
these fibers (which are $n$-spheres) are parallel in $\sqrt{2} \sphere^{2n+1}$.
Now the following result from \cite{wolf-1963, wong-1961}
shows that $f$ and $m$ extend to isometric fibrations on $\sqrt{2} \sphere^{2n+1}$.

\begin{prop} [\cite{wolf-1963, wong-1961}]
    Any fibration $f$ of $\sphere^{n} \times \sphere^{n}$ ($n = 1, 3, 7$) by parallel great $n$-spheres
    extends to a parallel fibration $\tilde{f}$ of all of $\sqrt{2} \sphere^{2n + 1}$ by parallel great $n$-spheres such that the following diagram commute,
    \begin{align*}
	\begin{CD}
	    \sphere^n \times \sphere^n @>\text{inclusion}>> \sqrt{2}\sphere^{2n+1} @>g_1>> \sqrt{2}\sphere^{2n+1}\\
	    @VVfV @VV\tilde{f}V @VV\tilde{m}V\\
	    \sphere^n @>e>> \sphere^{n+1} @>g_2>> \sphere^{n+1}
	\end{CD}
    \end{align*}
    where $e$ is a map, $g_1$ and $g_2$ are isometries, and $\tilde{m}$ is the Hopf fibration defined in \eqref{eq_hopf}.
    \label{prop_ext}
\end{prop}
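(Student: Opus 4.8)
The plan is to translate the statement into the classical language of isoclinic planes and then to quote the structure theory of Wong \cite{wong-1961} and Wolf \cite{wolf-1963}. First I would record what Propositions \ref{prop_iso} and \ref{prop_parallel} give. By Proposition \ref{prop_iso} each fiber $F_p := f^{-1}(p)$ is the graph of an isometry of $\sphere^n$, and since every isometry of the round $\sphere^n$ is the restriction of an element of $O(n+1)$, we have $F_p = V_p \cap \sqrt{2}\sphere^{2n+1}$, where $V_p \subset \real^{n+1} \times \real^{n+1}$ is the $(n+1)$-dimensional linear graph of that orthogonal map; thus $F_p$ is a great $n$-sphere in $\sqrt 2 \sphere^{2n+1}$. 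The $F_p$ are pairwise disjoint, their union is $\sphere^n \times \sphere^n$, and by Proposition \ref{prop_parallel} they are pairwise parallel. The one routine point to verify here is that being parallel in the product metric of $\sphere^n \times \sphere^n$, as in Proposition \ref{prop_parallel}, is the same as being Clifford parallel in the round metric of $\sqrt 2 \sphere^{2n+1}$, equivalently that the family of planes $\{V_p : p \in \sphere^n\}$ is isoclinic: two of the graphs $F_{p_1}, F_{p_2}$ are parallel precisely when the orthogonal map conjugating one to the other displaces every point of $\sphere^n$ through the same angle, and for graphs of orthogonal maps this is exactly the condition that all principal angles between $V_{p_1}$ and $V_{p_2}$ agree.

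We are thus reduced to a purely linear-algebraic situation: $\{V_p\}_{p \in \sphere^n}$ is an $n$-parameter isoclinic family of $(n+1)$-planes in $\real^{2n+2}$ whose associated great spheres exhaust the hypersurface $\sphere^n \times \sphere^n \subset \sqrt 2 \sphere^{2n+1}$. At this point I would invoke the classification of isoclinic planes and Clifford-parallel fibrations of Wong \cite{wong-1961} and Wolf \cite{wolf-1963}: every isoclinic family of $(n+1)$-planes in $\real^{2n+2}$ is contained in a maximal one, and for $n = 1, 3, 7$ the maximal isoclinic families of $(n+1)$-planes in $\real^{2n+2}$ coincide, up to the action of $O(2n+2)$, with the family of fibers of the Hopf fibration $\tilde m$ of \eqref{eq_hopf}, and they fill all of $\sqrt 2 \sphere^{2n+1}$. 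It is exactly here that the restriction $n = 1, 3, 7$ is forced, through the Hurwitz--Radon matrix equations that bound the maximal size of such a family. Let $\tilde{\mathcal{F}}$ be the maximal isoclinic family extending $\{V_p\}$.

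It remains to assemble the diagram. Let $\tilde f : \sqrt 2 \sphere^{2n+1} \to \sphere^{n+1}$ be the quotient map by $\tilde{\mathcal{F}}$, its base identified with $\sphere^{n+1}$. Any member of $\tilde{\mathcal{F}}$ that meets $\sphere^n \times \sphere^n$ at a point $z$ must equal the fiber $F_p$ through $z$, because the $F_p$ already cover $\sphere^n \times \sphere^n$ and the members of $\tilde{\mathcal{F}}$ are disjoint; hence the fibers of $\tilde f|_{\sphere^n \times \sphere^n}$ are exactly the $F_p$, so $\tilde f \circ (\text{inclusion}) = e \circ f$ for a map $e : \sphere^n \to \sphere^{n+1}$ (for $f = m$, $e$ is the inclusion of $\sphere^n$ as a totally geodesic equator of $\sphere^{n+1}$). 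This is the left square. For the right square, the uniqueness up to $O(2n+2)$ of the maximal isoclinic family furnishes an isometry $g_1$ of $\sqrt 2 \sphere^{2n+1}$ carrying $\tilde{\mathcal{F}}$ to the fiber family of $\tilde m$; it descends to an isometry $g_2$ of $\sphere^{n+1}$ with $\tilde m \circ g_1 = g_2 \circ \tilde f$. Gluing the two squares gives the proposition. The one serious obstacle is the classification input from \cite{wong-1961, wolf-1963} --- extending an arbitrary isoclinic family to a maximal one and identifying that maximal one with the Hopf family; reproving it amounts to reproving the relevant Hurwitz--Radon theory, which is precisely what singles out $n \in \{1, 3, 7\}$.
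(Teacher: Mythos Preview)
Your proposal is correct and follows essentially the same route as the paper: translate ``parallel great $n$-spheres'' into ``isoclinic $(n+1)$-planes in $\real^{2n+2}$,'' invoke Wolf's Theorem~7 (and Wong) to extend to a maximal isoclinic family unique up to an ambient isometry, and then read off $\tilde f$, $e$, $g_1$, $g_2$ from that extension. One small remark on scope: the opening appeal to Propositions~\ref{prop_iso} and~\ref{prop_parallel} is not needed here, since the hypothesis of Proposition~\ref{prop_ext} already hands you a fibration by parallel great $n$-spheres; your genuinely relevant observation in that paragraph is the equivalence of ``parallel in the product metric'' with ``Clifford parallel in $\sqrt 2\,\sphere^{2n+1}$,'' which the paper records in one line.
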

\begin{proof}
    Notice that parallel great $n$-spheres in $\sphere^{2n+1}$ span isoclinic $(n+1)$-planes in $\real^{2n + 2}$.

    \cite[Theorem 7]{wolf-1963} states that any $n$-dimensional ($n = 1, 3, 7$) family of isoclinic $(n+1)$-planes in $\real^{2n + 2}$ can be extend to an $(n+1)$-dimensional maximal family of isoclinic $(n + 1)$-planes in $\real^{2n + 2}$.
    Also, all $(n+1)$-dimensional maximal families of isoclinic $(n+1)$-planes in $\real^{2n + 2}$ are isometric to each other by the same theorem.
    Thus there is always a map $\tilde{f} : \sqrt{2} \sphere^{2n+1} \rightarrow \sphere^{n+1}$ isometric to $\tilde{m}$ such that any fiber of $f$ is also a fiber of $\tilde{f}$.

    Finally, define the map $e$ as $e(f(x)) = \tilde{f}(x)$. To check that the map is well-defined, let $f(x) = f(y)$, then $x$ and $y$ are in a same fiber of $f$ and thus in a same fiber of $\tilde{f}$, which implies that $\tilde{f}(x) = \tilde{f}(y)$. So $e$ is well-defined.
\end{proof}

Now we can finish the proof using an argument due to Herman Gluck.

\begin{proof}
    [Proof of Theorem \ref{dim3}]
    We need to prove that any fibration $f$ of $\sphere^{n} \times \sphere^{n}$ by parallel great $n$-spheres
    is isometric to $m$.

    Let $i : \sphere^{n} \rightarrow \sphere^{n+1}$ be the inclusion map
    defined as $i(x) = (x, 0)$.
    Then $i \circ m$ extends to the Hopf fibration $\tilde{m} : \sqrt{2} \sphere^{2n + 1} \rightarrow \sphere^{n+1}$.

    Extend $f$ to $\tilde{f}$ and obtain $e$, $g_1$ and $g_2$ as in Proposition \ref{prop_ext}.
    For any $x, y \in \sphere^n$,
    \begin{align*}
	d(x, y)
	&= \frac{1}{\sqrt{2}} d(f^{-1}(x), f^{-1}(y))\\
	&= d(\tilde{f}(f^{-1}(x)), \tilde{f}(f^{-1}(y)))\\
	&= d(e(x), e(y)),
    \end{align*}
    so $e$ is actually an isometric embedding.

    Pick an isometry $g_3 : \sphere^{n+1} \rightarrow \sphere^{n+1}$ homotopic to the identity map such that
    \begin{align*}
	g_3 \circ g_2 \circ e (\sphere^n) = i(\sphere^n).
    \end{align*}
    By the homotopy lifting property of the fibration $\tilde{m}$,
    there is an isometry\\$g_4 : \sqrt{2}\sphere^{2n+1} \rightarrow \sqrt{2} \sphere^{2n+1}$
    such that the following diagram commute.
    \begin{align*}
	\begin{CD}
	    \sqrt{2} \sphere^{2n+1} @>g_4>> \sqrt{2} \sphere^{2n+1}\\
	    @V\tilde{m}VV @V\tilde{m}VV\\
	    \sphere^{n+1} @>g_3>> \sphere^{n+1}
	\end{CD}
    \end{align*}
    (One can lift any curve in $\sphere^{n+1}$ to a horizontal curve in $\sqrt{2}\sphere^{2n+1}$, thus $g_3$, being homotopic to the identity map, can be lifted to $g_4$.)

    Since 
    \begin{align*}
	g_4 \circ g_1 (\sphere^n \times \sphere^n)
	&= \tilde{m}^{-1} \left(g_3 \circ g_2 \circ e (\sphere^n)\right)\\
	&= \tilde{m}^{-1} \left(i(\sphere^n)\right)\\
	&= \sphere^{n} \times \sphere^n,
    \end{align*}
    we can define an isometry $g_5 : \sphere^n \times \sphere^n \rightarrow \sphere^n \times \sphere^n$ as $g_5 = g_4 \circ g_1$ and
    an isometry $g_6 : \sphere^n \rightarrow \sphere^n$,
    as $g_6 = g_3 \circ g_2 \circ e$.
    Then $m \circ g_5 = g_6 \circ f$,
    that is,
    $m$ is isometric $f$.
    This completes the proof of Theorem \ref{dim3}.
\end{proof}

\section{An interesting counterexample}
\label{counter_example}
The authors of \cite{deturk-gluck-storm-2010} suspected that any Riemannian submersion between compact homogeneous spaces is a Lipschitz constant minimizer in its homotopy class, unique up to isometries.
The following example shows that this is not necessarily true if we drop the assumption on homogeneity,
even in the case where the receiving space is a circle.

Let $r(x) = 2 - \cos(4\pi x)$.
Define a Riemannian metric $g$ on the two torus $\torus = \real^2 / \integer^2$ as
$g_{11} = 1$, $g_{12} = g_{21} = 0$ and $g_{22}(x, y) = (r(x))^2$.
In otherwords,
this torus is the quotient of a surface of revolution.
For any $a \in [0, 1)$,
define a family of closed curves $\gamma_a(t) = (x(t), y(t))$ in the two torus as
\begin{align*}
    \begin{cases}
        \gamma_a(0) = (0, a),\\
        \frac{dx}{dt} = \frac{1}{r(x(t))},\\
	\frac{dy}{dt} = \frac{1}{r(x(t))}\sqrt{1 - \frac{1}{[r(x(t))]^2}} & \text{if $0 \le x(t) \le \frac{1}{2}$},\\
	\frac{dy}{dt} = -\frac{1}{r(x(t))}\sqrt{1 - \frac{1}{\left[r(x(t))\right]^2}} & \text{if $\frac{1}{2} \le x(t) \le 1$},\\
    \end{cases}
    \label{}
\end{align*}
and define $f : \torus \rightarrow \real / \integer$ defined as $f(\gamma_a(t)) = a$, where $\real / \integer$ is a circle of length $1$ with the standard metric.
(See Figure \ref{fig_f3d}.)
We shall prove that $f$ is a Riemannian submersion in the next paragraph.
However,
the map $g : \torus \rightarrow \real / \integer$ defined as $g(x, y) = y$ is homotopic to $f$
and is also of Lipschitz constant $1$, but it is not a Riemannian submersion;
in other words, $f$ is not the unique Lipschitz constant minimizer even up to isometries.

\begin{figure}[h]
    \centering
    \includegraphics[width=\textwidth]{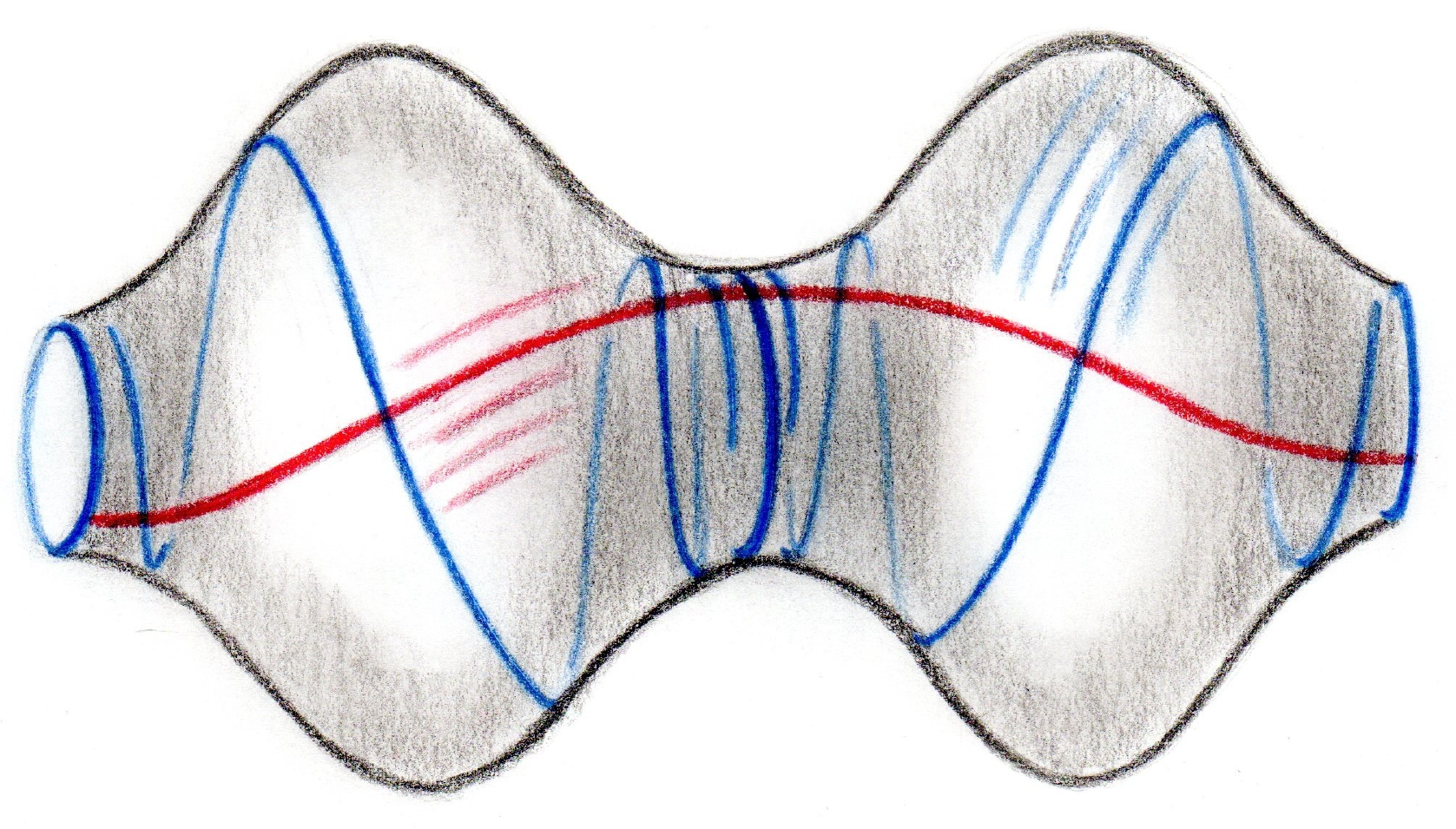}
    \caption{The torus in the example is depicted here, with left and right circle identified. The red curves are the level sets $\gamma_a$ of $f$. Each is a simple closed curve which goes around the torus the ``long way''. The blue curves are their orthogonal trajectories, and thus are the integral curves of $\nabla f$. Two of them are simple closed curves which go around the torus the ``short way''. The remaining blue curves are geodesics winding around the torus infinitely often the short way, and limit on the two closed ones.}
\label{fig_f3d}
\end{figure}

We can verify that $f$ is a Riemannian submersion as following.
Without loss of generality,
assume $0 \le x(t) \le \frac{1}{2}$,
then
\begin{align*}
    \gamma_a'(t) = \frac{1}{r(x(t))} \frac{\partial}{\partial x} + \frac{1}{r(x(t))}\sqrt{1 - \frac{1}{[r(x(t))]^2}} \frac{\partial}{\partial y},
    \label{}
\end{align*}
and thus
\begin{align}
    \frac{1}{r(x(t))} \frac{\partial f}{\partial x} + \frac{1}{r(x(t))}\sqrt{1 - \frac{1}{[r(x(t))]^2}} \frac{\partial f}{\partial y} = 0
    \label{eq_ct0}
\end{align}
as $\gamma_a'(t)$ is tangent to fibers.
By symmetry,
$f$ maps the circle $\set{x} \times \real / \integer$ to $\real / \integer$ uniformly,
and hence
\begin{align}
    \frac{\partial f}{\partial y} = 1.
    \label{eq_cty}
\end{align}
\eqref{eq_ct0} and \eqref{eq_cty} imply that
\begin{align}
    \frac{\partial f}{\partial x} = -\sqrt{1 - \frac{1}{(r(x))^2}}.
    \label{eq_ctx}
\end{align}
Now \eqref{eq_cty} and \eqref{eq_ctx} imply that
\begin{align}
    \left(-\sqrt{1 - \frac{1}{(r(x))^2}} \frac{\partial}{\partial x} + \frac{1}{(r(x))^2} \frac{\partial}{\partial y} \right) f = 1,
    \label{}
\end{align}
where $-\sqrt{1 - \frac{1}{(r(x))^2}} \frac{\partial}{\partial x} + \frac{1}{(r(x))^2} \frac{\partial}{\partial y}$ is a unit normal vector of a fiber.
Therefore $f$ is a Riemannian submersion.


%
%
%
%
%
%
%
%

\bibliography{mybib}{}
\bibliographystyle{alphaurl}
\end{document}